\newtheorem{theorem}{Theorem}[section]
\newtheorem{lemma}[theorem]{Lemma}
\theoremstyle{definition}
\newtheorem{definition}[theorem]{Definition}
\theoremstyle{remark}
\numberwithin{equation}{section}
\begin{document}

\title{A class of sequence spaces defined  by $l$-fractional difference operator}


\author{Sanjay Kumar mahto}
\address{Department of Mathematics, Indian Institute of Technology Kharagpur, Kharagpur 721302, India}
\curraddr{}
\email{kumarmahtosanjay@iitkgp.ac.in, skmahto0777@gmail.com}
\thanks{}

\author{P. D. Srivastava}
\address{Department of Mathematics, Indian Institute of Technology Kharagpur, Kharagpur 721302, India}
\curraddr{}
\email{ pds@maths.iitkgp.ac.in}
\thanks{}

\subjclass[2010]{40A05 }

\keywords{sequence space, normed space, dual, Pochhammer symbol, difference operator}

\date{}

\dedicatory{}

\begin{abstract}
In this paper, we generalize the fractional order difference operator using $l$- Pochhammer symbol and define $l$- fractional difference operator. The $l$- fractional difference operator is further used to introduce a class of difference sequence spaces. Some topological properties and duals of the newly defined spaces are studied.
\end{abstract}

\maketitle

\section{Introduction}
Fractional order difference operators defined on the set $w$ of all sequences of complex numbers have recently got an attention to many researchers because of its applicability in the Numerical analysis \cite{baliarsingh2013unifying}, statistical convergence \cite{kadak2017weighted}, approximation theory \cite{kadakgeneralized} etc. Fractional order difference operators are basically generalizations of $m$- th order difference operators, where $m$ is a nonnegative integer.  For a proper fraction $a$, Baliarsingh in \cite{baliarsingh2013unifying} and Dutta and Baliarsingh in \cite{dutta2014note} introduced some fractional order difference operators $\Delta^{a}$, $\Delta^{(a)}$, $\Delta^{-a}$ and $\Delta^{(-a)}$. The fractional order difference operator $\Delta^{(a)}: w \rightarrow w$ is defined by
\begin{equation}\label{baliarsinghfractional}
\Delta^{(a)}x = \Big\{\sum\limits_{i=0}^{\infty}(-1)^i \frac{\Gamma(a + 1)}{i! \Gamma(a - i + 1)}x_{k-i}\Big\}_k
\end{equation}
for all $x = \{x_k\} \in w$. Assuming $x_{-s} = 0$ for all positive integers $s$, the fractional order difference operator (\ref{baliarsinghfractional}) can be restated in terms of Pochhammer symbol as
\begin{equation}\label{baliarsinghfractional2}
\Delta^{(a)}x = \Big\{\sum\limits_{i=0}^{k}\frac{(-a)_i}{i!}x_{k-i}\Big\}_k,
\end{equation}
where the Pochhammer symbol $(a)_k$ is defined by
\begin{equation}
(a)_{k} =
\begin{cases}
1,  & \text{when} \    k = 0 \\
a(a +1)(a + 2)(a + 3) \dots (a + k-1), & \text{when} \ k \in \mathbb{N}\setminus \{0\}
\end{cases}
\end{equation}\par
A sequence space is a linear subspace of the set $w$ of all sequences of complex numbers. Difference operators are used to construct sequence spaces which are called difference sequence spaces. Kizmaz \cite{kizmaz1981certain} was the first to develop the idea of difference sequence space. For the spaces $ \mu = l_{\infty}, c$ and $c_0$, he defined and studied some Banach spaces
\begin{equation}\label{eq:kizmaz}
\mu(\Delta)
= \left\{ x =\{x_k\} \in w : \Delta x \in\mu\right\},
\end{equation}
 where $\Delta x = \{x_k - x_{k+1}\}$. Et and Colak \cite{et1995some} replaced the first order difference operator by an $m$-th order difference operator in $\mu(\Delta)$ and defined  $BK$- spaces
 \begin{equation}\label{eq:colak}
 \mu(\Delta^{m}) = \left\{ x =\{x_k\} \in w: \Delta^{m} x \in \mu\right\}.
 \end{equation}
 Later on, many authors such as Mursaleen \cite{mursaleen1996generalized}, Et et al. \cite{et2004some}, Aydin and Ba\c{s}ar \cite{aydin2004some}, Isik \cite{icsik2010generalized}, Srivastava and Kumar \cite{srivastava2010generalized}, Maji and Srivastava \cite{maji2014b}, Bhardwaj and Bala \cite{bhardwaj2010generalized}, Djolovi\'{c} and Malkowsky \cite{djolovic2011characterizations}, Chandra and Tripathy \cite{chandra2002generalised}, Hazarika and Savas \cite{hazarika2011some}, Hazarika \cite{hazarika2014difference} etc. have generalized (\ref{eq:kizmaz}) and (\ref{eq:colak}) and studied other difference sequence spaces. Recently, fractional difference operators have been used to define fractional difference sequence spaces such as in \cite{baliarsingh2013some}, \cite{dutta2014note}, \cite{kadak2015generalized} and many more.\par
 One way to enhance the fractional order difference operator (\ref{baliarsinghfractional2}) is to apply $l$-Pochhammer symbol \cite{diaz2007hypergeometric} instead of applying simply Pochhammer symbol. The $l$-Pochhammer symbol $(a)_{k,l}$, for a complex number $a$ and a real number $l$, is introduced by Diaz and Pariguan in \cite{diaz2007hypergeometric} as follows:
\begin{equation}
(a)_{k,l} =
\begin{cases}
1, & \text{when} \   k = 0 \\
a(a + l)(a + 2l)(a + 3l) \dots (a + (k-1)l), & \text{when} \ k \in \mathbb{N}\setminus \{0\}
\end{cases}
\end{equation}

In this paper, we introduce $l$-fractional difference operator $\Delta^{(a; l)}$ employing $l$-Pochhammer symbol. We further establish a class of sequence spaces $\mu (\Delta^{(a;l)}, v)$, where $\mu \in \{l_\infty, c_o,  c \}$ which is defined by the use of $l$-fractional difference operator $\Delta^{(a; l)}$. We  study some topological properties and determine $\alpha$, $\beta$ and $\gamma$ duals of the spaces $\mu (\Delta^{(a;l)}, v)$.

Throughout this paper, we use the following symbols: $w$, the set of all complex sequences; $\mathbb{C}$, the set of complex numbers; $\mathbb{R}$, the set of real numbers; $\mathbb{N}$, the set of natural numbers; $\mathbb{N}_0 = \mathbb{N}\cup \{0\}$; $l_{\infty}$, the space of bounded sequences; $c$, the space of convergent sequences; and $c_0$, the space of null sequences.
\section{$l$-fractional difference operator}
In this section, we define $l$-fractional difference operator and study some properties of this operator. Results in this section are equivalent to the results in \cite{baliarsingh2013some}.

\begin{definition}\label{deffractionaldifferenceoperator}
\normalfont
Let $a$ be a real number. Then $l$-fractional difference operator $\Delta^{(a; l)}: w \rightarrow w$ is defined by
\begin{equation}
\Delta^{(a; l)}x  = \Big\{\sum\limits_{i = 0}^{k}\frac{(-a)_{i,  l}}{i!}x_{k - i}\Big\}_k
\end{equation}
for all $x = (x_k) \in w$. For convenience we write the transformation of the $k^{th}$ term of a sequence $x = (x_k)$ by $\Delta^{(a; l)}$ as
 \begin{equation}
\Delta^{(a; l)}x_k  = \sum\limits_{i = 0}^{k}\frac{(-a)_{i,  l}}{i!}x_{k - i}
\end{equation}
\end{definition}

 For some particular values of $a$ and $l$, we have the following observations:
 \begin{itemize}
   \item For $l = 1$, the fractional difference operator $\Delta^{(a; l)}$ is reduced to the operator $\Delta^{(a)}$ (Equation (\ref{baliarsinghfractional})).
   \item If $l =1$ and $a =m$, a positive integer, then  $\Delta^{(a; l)}$ is basically the $m$ - th order backward difference operator $\Delta^{(m)}$, where $\Delta^{(1)}x_k = x_{k} - x_{k-1}$ and $\Delta^{(m)}x_k = \Delta^{(m-1)}(\Delta^{(1)}x_k)$.
   \item $\Delta^{(a; a)}x_k = x_k - a x_{k - 1}$.
   \item For $a = 2l$, we have $\Delta^{(a; l)}x_k = x_k - 2l x_{k -1} + l^2x_{k-2}$.
   \item $\Delta^{(\frac{1}{2}; \frac{1}{4})}x_k = x_k - \frac{1}{2}x_{k -1} + \frac{1}{16}x_{k-2}$.
   \item$\Delta^{(\frac{-1}{2};\frac{1}{4})} = x_k + \frac{1}{2}x_{k-1} + \frac{3}{16}x_{k - 2} + \frac{3}{48}x_{k-3} + \frac{5}{256}x_{k - 4}+ \dots  \\    \hspace{3cm} +\frac{(\frac{-1}{2})(\frac{-1}{2}+\frac{1}{4}) \dots (\frac{-1}{2}+(k-1)\frac{1}{4})}{k!}x_0$.
 \end{itemize}

\begin{theorem}
\normalfont
The $l$-fractional difference operator $\Delta^{(a;l)}$ is a linear operator.
\end{theorem}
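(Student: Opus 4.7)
The plan is to verify linearity by a direct computation on the $k$-th coordinate, using only the definition of $\Delta^{(a;l)}$ and the fact that a finite sum distributes over linear combinations. Since the $l$-Pochhammer coefficients $(-a)_{i,l}/i!$ are fixed complex scalars depending only on $a$, $l$, and $i$, nothing about the Pochhammer machinery actually enters the argument; the theorem is purely about the coefficient formula being a linear combination of shifted entries.

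Concretely, I would fix $\alpha, \beta \in \mathbb{C}$ and $x = (x_k), y = (y_k) \in w$, and set $z = \alpha x + \beta y$, so $z_k = \alpha x_k + \beta y_k$ for every $k$. Then I would write
\[
\Delta^{(a;l)} z_k \;=\; \sum_{i=0}^{k} \frac{(-a)_{i,l}}{i!}\, z_{k-i} \;=\; \sum_{i=0}^{k} \frac{(-a)_{i,l}}{i!}\bigl(\alpha x_{k-i} + \beta y_{k-i}\bigr),
\]
split the finite sum into two, pull the scalars $\alpha$ and $\beta$ outside, and recognize the resulting expressions as $\alpha\,\Delta^{(a;l)} x_k + \beta\,\Delta^{(a;l)} y_k$. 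Since this holds for every $k \in \mathbb{N}_0$, we conclude $\Delta^{(a;l)}(\alpha x + \beta y) = \alpha\,\Delta^{(a;l)} x + \beta\,\Delta^{(a;l)} y$ as sequences in $w$.

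There is no real obstacle: the sum defining $\Delta^{(a;l)} x_k$ is finite (it ranges over $0 \le i \le k$), so no convergence issue arises and no interchange of limits is needed. The only thing worth flagging explicitly in the write-up is that the coefficients $(-a)_{i,l}/i!$ do not depend on the sequence being acted on, which is what lets us factor them out of the sum when we split across $\alpha x$ and $\beta y$.
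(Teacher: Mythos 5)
Your proof is correct: the paper omits this proof as straightforward, and your direct coordinate-wise computation (splitting the finite sum and factoring out the scalars) is exactly the routine argument the authors have in mind. Your remark that no convergence issues arise because each sum is finite is apt and complete.
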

\begin{proof}
As the proof of this theorem is straightforward, we omit the proof.
\end{proof}

 \begin{theorem}\label{thcomposition}
 \normalfont
 For real numbers $a$ and $b$, the following results hold:
 \begin{enumerate}
  \item $\Delta^{(a; l)}(\Delta^{(b; l)}x_k) = \Delta^{(a+b; l)}x_k  = \Delta^{(b; l)}(\Delta^{(a; l)}x_k)$
  \item $\Delta^{(a; l)}(\Delta^{(-a; l)}x_k) = x_k$
 \end{enumerate}
  \end{theorem}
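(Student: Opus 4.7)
The plan is to prove part (1) directly by expanding the double application of the operator, reorganizing the resulting double sum by the total index, and identifying the inner coefficient via a Chu--Vandermonde-type identity for the $l$-Pochhammer symbol. Part (2) will then fall out by specialization.

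First I would write
\[
\Delta^{(a;l)}\bigl(\Delta^{(b;l)}x_k\bigr)
= \sum_{i=0}^{k}\frac{(-a)_{i,l}}{i!}\sum_{j=0}^{k-i}\frac{(-b)_{j,l}}{j!}x_{k-i-j},
\]
and reindex by setting $n = i+j$, yielding
\[
\Delta^{(a;l)}\bigl(\Delta^{(b;l)}x_k\bigr)
= \sum_{n=0}^{k}\Bigl(\sum_{i=0}^{n}\frac{(-a)_{i,l}}{i!}\,\frac{(-b)_{n-i,l}}{(n-i)!}\Bigr)x_{k-n}.
\]
Comparing with the definition of $\Delta^{(a+b;l)}x_k$, it suffices to prove, for every $n \in \mathbb{N}_0$,
\[
\sum_{i=0}^{n}\binom{n}{i}(-a)_{i,l}\,(-b)_{n-i,l}=(-(a+b))_{n,l}.
\]

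The key step is this Chu--Vandermonde-type identity. My plan is to reduce it to the classical one via the scaling relation $(\alpha)_{k,l}=l^{k}(\alpha/l)_{k}$ that follows immediately from the definition of the $l$-Pochhammer symbol. Pulling out $l^{n}$ from each term on the left, the identity to prove becomes
\[
\sum_{i=0}^{n}\binom{n}{i}\Bigl(-\frac{a}{l}\Bigr)_{i}\Bigl(-\frac{b}{l}\Bigr)_{n-i}=\Bigl(-\frac{a}{l}-\frac{b}{l}\Bigr)_{n},
\]
which is the standard Chu--Vandermonde identity for the rising factorial. (Should the reader prefer a self-contained argument, one can prove this by induction on $n$, using the recursion $(x)_{k+1}=(x)_{k}(x+k)$ and splitting $(x+y+n)$ as $(x+i)+(y+n-i)$ in the inductive step; I would include this short induction for completeness.) Commutativity of the composition then follows simply because the right-hand side $\Delta^{(a+b;l)}x_k$ is symmetric in $a$ and $b$.

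For part (2), I would specialize part (1) to $b=-a$ to get $\Delta^{(a;l)}(\Delta^{(-a;l)}x_k)=\Delta^{(0;l)}x_k$, and then observe directly from the definition that $(0)_{0,l}=1$ while $(0)_{i,l}=0$ for all $i\ge 1$, so that $\Delta^{(0;l)}x_k = x_k$. The main obstacle is really only the Vandermonde-type identity; once that is in hand, everything else is bookkeeping.
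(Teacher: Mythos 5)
Your proposal follows essentially the same route as the paper: expand the composition by linearity, reindex the double sum by the total index, and reduce everything to the Vandermonde-type convolution identity $\sum_{i=0}^{n}\binom{n}{i}(-a)_{i,l}(-b)_{n-i,l}=(-(a+b))_{n,l}$, with part (2) obtained by specializing $b=-a$. The only difference is that the paper merely asserts this identity ``by mathematical induction,'' whereas you actually justify it via the scaling $(\alpha)_{k,l}=l^{k}(\alpha/l)_{k}$ and classical Chu--Vandermonde (just note that this scaling requires $l\neq 0$; for $l=0$ the identity degenerates to the binomial theorem, or one can fall back on your induction), so your argument is correct and, if anything, more complete.
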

 \begin{proof}
 We give the proof of the first result only. The second result can be deduced from the first result. We consider
 \begin{equation}
 \Delta^{(a; l)}(\Delta^{(b; l)}x_k) = \Delta^{(a; l)} \sum\limits_{i = 0}^{k}\frac{(-b)_{i,  l}}{i!}x_{k - i}
 \end{equation}
 As the $l$-fractional difference operator is linear,
 \begin{eqnarray}
  \Delta^{(a; l)}(\Delta^{(b; l)}x_k) &=& \sum\limits_{i = 0}^{k}\frac{(-b)_{i,  l}}{i!}\Delta^{(a; l)}x_{k - i} \nonumber\\
                                                                         &=& \sum\limits_{i = 0}^{k}\frac{(-b)_{i,  l}}{i!}\sum\limits_{j= 0}^{k-i}\frac{(-a)_{j,  l}}{j!}x_{k - i-j}\label{eq:coefficient1}.
  \end{eqnarray}
  Taking the transformation $i+j = m$ or arranging the coefficients of $x_{k-m}$ for $m = 0, 1, \dots , k$, the Equation (\ref{eq:coefficient1}) can be rewritten as
  \begin{equation}\label{eq:bionomial}
   \Delta^{(a; l)}(\Delta^{(b; l)}x_k) =\sum\limits_{m=0}^{k}\Big(\sum\limits_{i=0}^{m}\frac{(-b)_{i,l}}{i!} \frac{(-a)_{(m-i),l}}{(m-i)!}\Big)x_{k-m}
   \end{equation}
   By mathematical induction, it is easy to see that
  \begin{equation}\label{eq:binomial2}
    \frac{(-(a+b))_{m,l}}{m!} = \Big(\sum\limits_{i=0}^{m}\frac{(-b)_{i,l}}{i!} \frac{(-a)_{(m-i),l}}{(m-i)!}\Big).
    \end{equation}
    Then Equations (\ref{eq:bionomial}) and (\ref{eq:binomial2}) imply that
   \begin{equation*}
   \Delta^{(a; l)}(\Delta^{(b; l)}x_k)  =\sum\limits_{m=0}^{k} \frac{(-(a+b))_{m,l}}{m!}x_{k-m} = \Delta^{(a+b; l)}x_k.
  \end{equation*}
 Similarly, we can prove that
  \begin{equation*}
  \Delta^{(b; l)}(\Delta^{(a; l)}x_k) =  \Delta^{(a+b; l)}x_k.
  \end{equation*}
 This proves the theorem.
\end{proof}

 \section{Sequence spaces $\mu (\Delta^{(a;l)}, v)$ for $\mu = l_\infty, c_o,$ and $c$.}
In this section, we introduce a class of sequence spaces $\mu (\Delta^{(a;l)}, v)$, where $\mu \in \{l_\infty, c_o, c\}$, by the use of $l$-fractional difference operator $\Delta^{(a;l)}$. Let $V$ be the set of all real sequences $v = \{v_n\}$ such that $v_n \neq 0$ for all $n \in \mathbb{N}_0$. Then the class of sequence spaces $\mu (\Delta^{(a;l)}, v)$ is defined by
\begin{equation*}
\mu (\Delta^{(a;l)}, v) = \Big\{ x = \{x_n\} \in w : \Big\{ \sum\limits_{j = 0}^{n}v_j \Delta^{(a;l)}x_j  \Big\}_n \in \mu\Big\}.
\end{equation*}
Let $y_n$ denotes the $n^{\text{th}}$ term of the sequence $\Big\{ \sum\limits_{j = 0}^{n}v_j \Delta^{(a;l)}x_j  \Big\}_n$, then
\begin{eqnarray}
y_n &=& \sum\limits_{j = 0}^{n}v_j \Delta^{(a;l)}x_j  \nonumber\\
        &=& \sum\limits_{j = 0}^{n}v_j\Big(\sum\limits_{i = 0}^{j}\frac{(-a)_{i,  l}}{i!}x_{j - i}\Big) \label{eq:coefficient2}
\end{eqnarray}
Taking the transformation $j-i = m$ or arranging the coefficients of $x_m$ for $m = 0, 1, \dots , n$, the Equation (\ref{eq:coefficient2}) can be rewritten as
\begin{equation*}
     y_n   =\sum\limits_{m=0}^{n}\Big(\sum\limits_{i=0}^{n-m}\frac{(-a)_{i,l}}{i!}v_{i+m}\Big)x_m.
\end{equation*}
Now, we consider a matrix $C = (c_{nm})$ such that
\begin{equation}\label{eq:matrixforyn}
c_{nm} = \begin{cases} \sum\limits_{i=0}^{n-m}\frac{(-a)_{i,l}}{i!}v_{i+m}, \ \text{when} \ n\geq m \\ 0, \ \text{when} \ n < m \end{cases}.
\end{equation}
Then the sequence $\{y_n\}$ is $C$-transform of the sequence $\{x_n\}$.

 \begin{theorem}
 \normalfont
The space $\mu(\Delta^{(a;l)}, v)$ for $\mu = l_{\infty}, c$ or $c_0$ is a normed linear space with respect to the norm $\lVert \cdot \rVert_{\mu (\Delta^{(a;l)}, v)}$ defined by
 \begin{equation*}
\lVert x \rVert _{\mu (\Delta^{(a;l)},v)} = \sup_{n}\Big \lvert \sum\limits_{j = 0}^{n}v_j \Delta^{(a;l)}x_j  \Big \rvert.
 \end{equation*}
 \end{theorem}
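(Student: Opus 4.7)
The proof splits naturally into two tasks: showing that $\mu(\Delta^{(a;l)}, v)$ is a linear subspace of $w$, and verifying the four norm axioms for $\lVert \cdot \rVert_{\mu(\Delta^{(a;l)},v)}$. The plan is to exploit the fact that the map $T : w \to w$ sending $x \mapsto \{y_n\}_n$ with $y_n = \sum_{j=0}^{n} v_j \Delta^{(a;l)} x_j$ is linear. This follows at once from the linearity of $\Delta^{(a;l)}$ (the earlier theorem) together with the linearity of finite summation. Since $\mu$ is itself a linear subspace of $w$ for $\mu \in \{l_\infty, c, c_0\}$, the preimage $T^{-1}(\mu) = \mu(\Delta^{(a;l)}, v)$ is a linear subspace of $w$.

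For the norm, I would first observe that the sup is finite: if $x \in \mu(\Delta^{(a;l)}, v)$ then $\{y_n\} \in \mu \subseteq l_\infty$, so $\sup_n |y_n| < \infty$. Non-negativity is immediate. Absolute homogeneity, $\lVert \alpha x\rVert = |\alpha| \lVert x\rVert$, and subadditivity, $\lVert x + z\rVert \leq \lVert x\rVert + \lVert z\rVert$, follow directly from the linearity of $T$ combined with the corresponding properties of $|\cdot|$ and $\sup$.

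The only point requiring genuine argument is the definiteness axiom: $\lVert x\rVert_{\mu(\Delta^{(a;l)},v)} = 0 \Rightarrow x = 0$. I would proceed as follows. If the norm vanishes, then $y_n = 0$ for every $n$. Since $y_0 = v_0 \Delta^{(a;l)} x_0 = v_0 x_0$ and $v_0 \neq 0$, this forces $x_0 = 0$. For $n \geq 1$, the telescoping relation $y_n - y_{n-1} = v_n \Delta^{(a;l)} x_n$ together with $v_n \neq 0$ gives $\Delta^{(a;l)} x_n = 0$. Expanding,
\begin{equation*}
\Delta^{(a;l)} x_n = x_n + \sum_{i=1}^{n} \frac{(-a)_{i,l}}{i!} x_{n-i} = 0,
\end{equation*}
since the $i=0$ coefficient equals $1$. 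An induction on $n$ now yields $x_n = 0$ for all $n$: assuming $x_0 = x_1 = \cdots = x_{n-1} = 0$, the displayed identity forces $x_n = 0$.

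I do not expect any serious obstacle. The one place where care is needed is keeping the logical order right in the definiteness step (first extract $\Delta^{(a;l)} x_n = 0$ from the vanishing of partial sums via the hypothesis $v_n \neq 0$, then invert $\Delta^{(a;l)}$ by induction using that the leading coefficient is $1$); everything else is a routine transfer of norm axioms from $l_\infty$ through the linear map $T$.
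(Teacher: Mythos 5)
Your proof is correct and follows essentially the same route as the paper: homogeneity and subadditivity are routine, and definiteness is obtained by telescoping the vanishing partial sums to get $v_n\Delta^{(a;l)}x_n=0$ and then inverting $\Delta^{(a;l)}$. If anything, your write-up is more complete than the paper's, which asserts ``this shows $x_n=0$ for all $n$'' without spelling out the induction on the leading coefficient $1$ that you supply.
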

 \begin{proof}
 We observe that $\lVert \alpha x \rVert _{\mu (\Delta^{(a;l)},v)} = \lvert \alpha \rvert \lVert x \rVert _{\mu (\Delta^{(a;l)},v)} $ for all $\alpha \in \mathbb{C}$ and $\lVert x+y \rVert _{\mu (\Delta^{(a;l)},v)}  \leq \lVert x \rVert _{\mu (\Delta^{(a;l)},v)} + \lVert y \rVert _{\mu (\Delta^{(a;l)},v)} $ for all sequences $x = \{x_n\}$ and $\ y=\{y_n\}$ of $\mu (\Delta^{(a;l)}, v)$. Now suppose that $x=\{x_k\} \in \mu (\Delta^{(a;l)}, v)$ is such that $\lVert x \rVert _{\mu (\Delta^{(a;l)},v)} = 0$. Then
 \begin{equation*}
 \sup_{n}|\sum\limits_{j=0}^{n}v_j \Delta^{(a;l)}x_j|=0.
 \end{equation*}
 This implies that
 \begin{equation*}
 |\sum\limits_{j=0}^{n}v_j \Delta^{(a;l)}x_j|=0 \ \text{for all} \ n \in \mathbb{N}_0.
 \end{equation*}
  This gives,
  \begin{align*}
  v_0 \Delta^{(a;l)}x_0&=0\\
  v_1 \Delta^{(a;l)}x_1& = 0\\
  & \vdots
  \end{align*}
 This shows that $x_n = 0$ for all $n \in \mathbb{N}_0$. That is, $x = (0, 0, \dots)$. Thus $\lVert\cdot \rVert_{\mu (\Delta^{(a;l)},v)}$ is a norm and $\mu (\Delta^{(a;l)}, v)$ is a normed linear space for $\mu = l_{\infty}, c_0$ or $c$.
 \end{proof}

 \begin{theorem}
 \normalfont
 The space $\mu(\Delta^{(a;l)}, v)$ for $\mu = l_{\infty}, c$ or $c_0$ is a complete normed linear space.
 \end{theorem}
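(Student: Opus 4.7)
The plan is the standard ``Cauchy sequence of sequences'' argument for a $BK$-type space, adapted to the triangular transform $C = (c_{nm})$ given in (\ref{eq:matrixforyn}). Let $\{x^{(p)}\}_p \subset \mu(\Delta^{(a;l)}, v)$ be a Cauchy sequence, where $x^{(p)} = (x_k^{(p)})_k$, and set $y_n^{(p)} = \sum_{j=0}^{n} v_j \Delta^{(a;l)} x_j^{(p)}$. Since for every $p,q$
\begin{equation*}
|y_n^{(p)} - y_n^{(q)}| \le \sup_{n} |y_n^{(p)} - y_n^{(q)}| = \|x^{(p)} - x^{(q)}\|_{\mu(\Delta^{(a;l)}, v)},
\end{equation*}
the sequence $\{y_n^{(p)}\}_p$ is Cauchy in $\mathbb{C}$ for each fixed $n$, hence converges to some $y_n \in \mathbb{C}$.

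Next I would recover a candidate limit sequence $x = (x_k)$. Because $C$ is lower triangular with diagonal entries $c_{nn} = v_n \neq 0$ (corresponding to $i = 0$ in (\ref{eq:matrixforyn})), the relation $y_n = \sum_{m=0}^{n} c_{nm} x_m$ can be inverted inductively:
\begin{equation*}
x_0 = \frac{y_0}{v_0}, \qquad x_n = \frac{1}{v_n}\Bigl(y_n - \sum_{m=0}^{n-1} c_{nm} x_m\Bigr) \quad (n \ge 1).
\end{equation*}
By construction, the $C$-transform of this $x$ is exactly $\{y_n\}_n$, and componentwise one also checks that $x_k^{(p)} \to x_k$ as $p \to \infty$ for every $k$ (this follows by induction on $k$ from $y_n^{(p)} \to y_n$ and $v_n \neq 0$).

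Then I would show $x^{(p)} \to x$ in norm. Fix $\varepsilon > 0$ and choose $N$ with $\|x^{(p)} - x^{(q)}\|_{\mu(\Delta^{(a;l)}, v)} < \varepsilon$ for all $p,q \ge N$. Then for every $n$ and every $p,q \ge N$ we have $|y_n^{(p)} - y_n^{(q)}| < \varepsilon$; letting $q \to \infty$ gives $|y_n^{(p)} - y_n| \le \varepsilon$ uniformly in $n$, i.e.\ $\|x^{(p)} - x\|_{\mu(\Delta^{(a;l)}, v)} \le \varepsilon$ for $p \ge N$.

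Finally I must verify $x \in \mu(\Delta^{(a;l)}, v)$, i.e.\ $\{y_n\}_n \in \mu$. For $\mu = l_\infty$ this follows from $\|y^{(N)} - y\|_\infty \le \varepsilon$ and $y^{(N)} \in l_\infty$. For $\mu = c$ or $c_0$ it is the classical fact that $c$ and $c_0$ are closed subspaces of $l_\infty$ under the sup norm, and $y^{(p)} \to y$ uniformly in $n$. The main, if minor, obstacle is the inversion of the triangular system and the bookkeeping connecting the pointwise limits $y_n$ to the transform of the reconstructed $x$; everything else is the routine Banach-space completion pattern, powered by the fact that $v_n \neq 0$ makes $C$ invertible term-by-term.
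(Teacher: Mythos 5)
Your proof is correct and follows essentially the same route as the paper: pass to the $C$-transforms, use completeness of the scalar field coordinatewise, let $q \to \infty$ in the Cauchy estimate to get norm convergence, and then check membership of the limit. In fact you are more careful than the paper at the two points it glosses over: you explicitly invert the lower-triangular system (using $c_{nn} = v_n \neq 0$) to construct the limit sequence $x$ whose $C$-transform is $\{y_n\}$, whereas the paper simply names the coordinatewise limit $(Cx)_n$ without producing $x$, and you explicitly handle $\mu = c$ and $c_0$ via their closedness in $l_\infty$, whereas the paper writes out only the $l_\infty$ case.
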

 \begin{proof}
 As proof of the theorem for the spaces $\mu(\Delta^{(a;l)}, v)$ for $\mu = l_{\infty}, c$, and $c_0$ run along similar lines, we just prove that the space $l_{\infty}(\Delta^{(a;l)},v)$ is a complete normed linear space. For this, let $\{x^i\}_i = \{x^1 = \{x^1_k\}_{k=0}^{\infty}, x^2 = \{x^2_k\}_{k=0}^{\infty}, \dots\}$ be a Cauchy sequence in $l_{\infty}(\Delta^{(a;l)}, v)$. Then, from definition of Cauchy sequence, there exists a natural number $n_0(\epsilon)$ corresponding to each real number $\epsilon > 0$ such that
 \begin{equation}\label{inq:completeness1}
 \lVert x^i - x^m\rVert_{\mu(\Delta^{(a;l)}, v)} < \epsilon
 \end{equation}
 for all $i, m \geq n_0(\epsilon)$. By definition of the norm $\lVert\cdot \rVert_{\mu (\Delta^{(a;l)},v)}$, Equation (\ref{inq:completeness1}) gives
 \begin{equation}\label{eq:th3.2.2}
 \sup_n \Big \lvert \sum\limits_{j=0}^{n}v_j \Delta^{(a;l)}x^i_j - \sum\limits_{j=0}^{n}v_j \Delta^{(a;l)}x^m_j \Big\rvert < \epsilon
 \end{equation}
  for all $i, m \geq n_0(\epsilon)$. Inequality (\ref{eq:th3.2.2}) can be rewritten with respect to $C$ - transformation (see (\ref{eq:matrixforyn}))as
  \begin{equation*}
  \sup_n \lvert (Cx^i)_n - (Cx^m)_n\rvert < \epsilon
  \end{equation*}
    for all $i, m \geq n_0(\epsilon)$, where $(Cx)_n$ denotes the $n$-th term of the sequence $\{Cx\}$. This implies that for each nonnegative integer $n$
      \begin{equation}\label{inq:3.2.4}
   \lvert (Cx^i)_n - (Cx^m)_n\rvert < \epsilon
  \end{equation}
    for all $i, m \geq n_0(\epsilon)$. Inequality (\ref{inq:3.2.4}) shows that for a fixed nonnegative integer $n$ the sequence $\{(Cx^i)_n\}_i =\{(Cx^1)_n, (Cx^2)_n, \dots\}$ is a Cauchy sequence in the set of real numbers $\mathbb{R}$. As $\mathbb{R}$ is complete, the sequence $\{(Cx^i)_n\}_i$ converges to $(Cx)_n$ (say). Letting $m$ tends to infinity in Inequality (\ref{inq:3.2.4}), we get
    \begin{equation}\label{inq:3.2.5}
   \lvert (Cx^i)_n - (Cx)_n\rvert < \epsilon
  \end{equation}
   for all $i \geq n_0(\epsilon)$. Since Inequality (\ref{inq:3.2.5}) holds for each nonnegative integer $n$, we have
     \begin{equation}\label{inq:3.2.6}
   \sup_n \lvert (Cx^i)_n - (Cx)_n\rvert < \epsilon
  \end{equation}
  for all $i \geq n_0(\epsilon)$, that is
  \begin{equation}\label{inq:3.2.7}
  \lVert x^i - x\rVert_{\mu (\Delta^{(a;l)}, v)} < \epsilon
  \end{equation}
  for all $i \geq n_0(\epsilon)$. Now, it remains to show that the sequence  $x = \{x_n\}$ belongs to $\mu (\Delta^{(a;l)}, v)$. Since the sequence $\{x^i\}$ belongs to $\mu (\Delta^{(a;l)},v)$, there exists a real number $M$ such that
  \begin{equation}\label{inq:3.2.8}
  \sup_n \lvert (Cx^i)_n\rvert \leq M.
  \end{equation}
  With the help of Inequalities (\ref{inq:3.2.6}) and (\ref{inq:3.2.8}), we conclude that
  \begin{equation}
  \sup_n \lvert (Cx)_n\rvert = \sup_n \lvert (Cx)_n - (Cx^i)_n\rvert + \sup_n \lvert (Cx^i)_n \rvert \leq M + \epsilon
  \end{equation}
  This shows that the sequence $x = \{x_n\} \in l_{\infty}(\Delta^{(a;l)}, v)$. Thus, the space $l_{\infty} (\Delta^{(a;l)}, v)$ is a complete normed linear space.
 \end{proof}

  \begin{theorem}
  \normalfont
  The space $\mu(\Delta^{(a;l)}, v)$ (where $\mu = l_{\infty}, c_0$ or $c $) is linearly isomorphic to the space $\mu$.
  \end{theorem}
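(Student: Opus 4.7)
The plan is to exhibit an explicit linear bijection between $\mu(\Delta^{(a;l)}, v)$ and $\mu$ induced by the $C$-transform already used in the previous completeness proof. Define
\begin{equation*}
T : \mu(\Delta^{(a;l)}, v) \longrightarrow \mu, \qquad (Tx)_n = \sum_{j=0}^{n} v_j \Delta^{(a;l)} x_j = (Cx)_n.
\end{equation*}
Linearity of $T$ is immediate from the linearity of $\Delta^{(a;l)}$ (the first theorem of Section 2), and $T$ maps into $\mu$ by the very definition of $\mu(\Delta^{(a;l)}, v)$. Moreover, $T$ is an isometry onto its image because
\begin{equation*}
\|x\|_{\mu(\Delta^{(a;l)}, v)} = \sup_n |(Cx)_n| = \|Tx\|_\infty,
\end{equation*}
which in particular forces injectivity (if $Tx = 0$ then $\|x\| = 0$, and we have already established that this norm is nondegenerate).

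The substantive part is surjectivity, i.e.\ constructing the inverse. Given $y = \{y_n\} \in \mu$, set $y_{-1} := 0$ and
\begin{equation*}
z_n := \frac{y_n - y_{n-1}}{v_n}, \qquad n \in \mathbb{N}_0,
\end{equation*}
which is well defined since $v_n \neq 0$. Any preimage $x$ of $y$ under $T$ must satisfy $v_n \Delta^{(a;l)} x_n = y_n - y_{n-1}$, i.e.\ $\Delta^{(a;l)} x = z$. I would then apply Theorem \ref{thcomposition} and define
\begin{equation*}
x_n := \Delta^{(-a;l)} z_n = \sum_{i=0}^{n} \frac{(a)_{i,l}}{i!}\, z_{n-i},
\end{equation*}
which is a finite sum and so produces a well-defined element $x \in w$. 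Part (2) of Theorem \ref{thcomposition} gives $\Delta^{(a;l)} x_n = \Delta^{(a;l)}(\Delta^{(-a;l)} z_n) = z_n$, and then telescoping yields
\begin{equation*}
(Tx)_n = \sum_{j=0}^{n} v_j \Delta^{(a;l)} x_j = \sum_{j=0}^{n} (y_j - y_{j-1}) = y_n,
\end{equation*}
so $Tx = y \in \mu$. Consequently $x \in \mu(\Delta^{(a;l)}, v)$ and $T$ is surjective.

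The only point that requires genuine attention is the inversion step: one must justify defining $x$ through the formal operator $\Delta^{(-a;l)}$ and then confirm, by invoking the composition identity of Theorem \ref{thcomposition}, that the candidate preimage actually reproduces $y$ after telescoping. Once this is in place, the three cases $\mu = l_\infty, c, c_0$ are treated uniformly, because $T$ is a norm-preserving linear bijection and therefore carries each of these classical spaces isomorphically onto its counterpart inside $\mu(\Delta^{(a;l)}, v)$.
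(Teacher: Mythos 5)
Your proposal is correct and follows essentially the same route as the paper: the same map $T$ given by the $C$-transform, injectivity from the nondegeneracy of the norm, and surjectivity via the explicit preimage $x_n = \Delta^{(-a;l)}\bigl((y_n - y_{n-1})/v_n\bigr)$ together with part (2) of Theorem \ref{thcomposition} and telescoping. The isometry observation is a small additional touch, but the substance of the argument coincides with the paper's proof.
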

  \begin{proof}
  To prove this theorem, we have to show that there exists a mapping $T: \mu (\Delta^{(a;l)}, v) \rightarrow \mu$, which is linear and bijective. For our purpose, we suppose that $T$ is defined by
  \begin{equation}
  Tx = \Big\{\sum\limits_{j=0}^{n}v_j \Delta^{(a;l)}x_j\Big\}_n = \{y_n\}
  \end{equation}
  for all $x = \{x_n\} \in \mu (\Delta^{(a;l)}, v)$. We observe that the mapping $T$ is linear and it satisfies the property ``$Tx = \theta \ implies \ x = \theta$", where $\theta = \{0, 0, \dots\}$. The property ``$Tx = \theta \ implies \ x = \theta$" shows that the mapping $T$ is injective. Now it remains to prove that the mapping $T$ is surjective. Therefore, we consider an element $\{y_n\} \in \mu$ and determine a sequence $\{x_n\}$ such that
  \begin{equation}\label{eq:xnforalphadual}
  x_n = \Delta^{(-a;l)}\Big(\frac{y_n - y_{n-1}}{v_n}\Big)
  \end{equation}
  for all $n \in \mathbb{N}_0$. Then we see that
  \begin{equation}\label{eqisomorphic3}
  \sum\limits_{j=0}^{n} v_j \Delta^{(a;l)}x_j = \sum\limits_{j=0}^{n} v_j \Delta^{(a;l)} \Delta^{(-a;l)}\Big(\frac{y_j - y_{j-1}}{v_j}\Big).
  \end{equation}
By Theorem \ref{thcomposition}, the Equation (\ref{eqisomorphic3}) gives
  \begin{equation}
  \sum\limits_{j=0}^{n} v_j \Delta^{(a;l)}x_j = \sum\limits_{j=0}^{n}(y_j - y_{j-1}) = y_n
  \end{equation}
  This shows that $\Big\{\sum\limits_{j=0}^{n} v_j \Delta^{(a;l)}x_j\Big\}_n = \{y_n\} \in \mu$. Using Definition (\ref{deffractionaldifferenceoperator}), we have $\{x_n\}\in \mu(\Delta^{(a;l)}, v)$. Thus we have shown that for every sequence $\{y_n\}\in \mu$, there exists a sequence $\{x_n\} \in \mu(\Delta^{(a;l)}, v)$. That is, the mapping $T$ is a surjective mapping. Hence the space $\mu(\Delta^{(a;l)}, v)$ is linearly isomorphic to the space $\mu$.
  \end{proof}

   \section{$\alpha-, \beta-$ and $\gamma-$ duals of the spaces $\mu (\Delta^{(a;l)}, v)$ for $\mu = l_\infty, c_o,$ and $c$. }
   In this section, we determine $\alpha-, \beta-$ and $\gamma-$ duals of the spaces $\mu (\Delta^{(a;l)}, v)$ for $\mu = l_\infty, c_o$ and $c$. For two sequence spaces $\mu$ and $\lambda$, the multiplier space of $\mu$ and $\lambda$ is defined by
   \begin{equation*}
   S(\mu, \lambda) = \big\{ z = \{z_k\} \in w: zx = \{z_k x_k\} \in \lambda \ \text{for all} \ x = \{x_k\} \in \mu\big\}.
   \end{equation*}
   In particular, $\alpha-$, $\beta-$, and $\gamma-$ duals of a space $\mu$ are defined by $\mu^{\alpha} = S(\mu, l_1)$, $\mu^{\beta} = S(\mu, cs)$ and $\mu^{\gamma} = S(\mu, bs)$ respectively, where $l_1$, $cs$ and $bs$ are the space of absolutely summable sequences, the space of convergent series and the space of bounded series respectively. To find duals of the spaces  $\mu (\Delta^{(a;l)}, v)$, we have used the following lemmas that are given by Stieglitz and Tietz in \cite{stieglitz1977matrixtransformationen}:
   \begin{lemma}\label{lemma1}
   \normalfont
   $B \in (l_{\infty}: l_1) = (c_0: l_1) =(c: l_1)$ if and only if
   \begin{equation}\label{cond:lemma1}
   \sup\limits_{K \in \textit{F}} \sum\limits_{n}  \Big \lvert \sum\limits_{k \in K} b_{nk} \Big\rvert < \infty.
   \end{equation}
   \end{lemma}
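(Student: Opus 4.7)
The plan is to split the chain $(l_{\infty} : l_1) = (c : l_1) = (c_0 : l_1)$ into two halves. The inclusions $(l_{\infty} : l_1) \subseteq (c : l_1) \subseteq (c_0 : l_1)$ are immediate from $c_0 \subseteq c \subseteq l_{\infty}$, so the task reduces to closing the chain by showing that $B \in (c_0 : l_1)$ forces (\ref{cond:lemma1}), and that (\ref{cond:lemma1}) in turn forces $B \in (l_{\infty} : l_1)$.

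For the necessity direction I would treat the induced linear map $B : c_0 \to l_1$ as a bounded operator; this follows from a closed graph argument, after observing that for each $k$ the row $(b_{km})_m$ lies automatically in $l_1 = (c_0)^*$, so that coordinatewise limits in $c_0$ and $l_1$ are compatible with $B$. For each finite $K \subset \mathbb{N}_0$ the characteristic vector $\chi_K = \sum_{k \in K} e_k$ belongs to $c_0$ with $\|\chi_K\|_{\infty} = 1$ and $(B\chi_K)_n = \sum_{k \in K} b_{nk}$, so $\sum_n \bigl|\sum_{k \in K} b_{nk}\bigr| = \|B\chi_K\|_1 \leq \|B\|$; taking the supremum over finite $K$ yields (\ref{cond:lemma1}).

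Sufficiency is where the real work lies. Assume (\ref{cond:lemma1}) with supremum $M$ and let $x \in l_{\infty}$ with $\|x\|_{\infty} \leq 1$. A preliminary step is to check that each row of $B$ is in $l_1$: grouping the indices $k \leq N$ by the signs of $\operatorname{Re} b_{nk}$ and $\operatorname{Im} b_{nk}$ produces four finite subsets on each of which $|\sum_{k \in K} b_{nk}|$ captures a partial sum of $|\operatorname{Re} b_{nk}|$ or $|\operatorname{Im} b_{nk}|$, giving $\sum_k |b_{nk}| \leq 4M$ and making $(Bx)_n$ absolutely convergent. The core step is a layer-cake decomposition applied to the finite truncation $x^{(N)} = (x_0, \dots, x_N, 0, \dots)$: in the real case, $x_k^{\pm} = \int_0^1 \mathbf{1}_{\{x_k^{\pm} > t\}}\, dt$ converts $\sum_{k \leq N} b_{nk} x_k$ into $\int_0^1 \bigl(\sum_{k \in K_+^N(t)} b_{nk} - \sum_{k \in K_-^N(t)} b_{nk}\bigr)\, dt$, where the index sets $K_{\pm}^N(t) = \{k \leq N : \pm x_k > t\}$ are finite. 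Tonelli and (\ref{cond:lemma1}) then bound $\sum_n |\sum_{k \leq N} b_{nk} x_k|$ by $2M$; splitting $x$ into real and imaginary parts covers the complex case with $4M$. Fatou's lemma in $N$ finishes by giving $\sum_n |(Bx)_n| \leq 4M$, placing $Bx$ in $l_1$.

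The main obstacle is this sufficiency step: bridging a hypothesis stated only on finite index sets to an action on a bounded, possibly infinitely supported, sequence. The layer-cake representation is the device that effects the bridge, expressing each value $x_k \in [-1,1]$ as an integral of $0/1$-valued indicators and hence the action of $B$ on $x$ as an integral average of actions of $B$ on characteristic functions of finite sets — precisely what (\ref{cond:lemma1}) controls. Fatou's lemma then disposes of the truncation at the end.
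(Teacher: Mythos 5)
The paper offers no proof of this lemma at all: it is quoted from the Stieglitz--Tietz survey as one of several known matrix-transformation characterizations, used only as input for the dual computations in Section 4. So there is no in-paper argument to compare against, and your blind proof stands or falls on its own. It stands: the reduction of the three-space equality to the two implications ``$B\in(c_0:l_1)$ forces (\ref{cond:lemma1})'' and ``(\ref{cond:lemma1}) forces $B\in(l_{\infty}:l_1)$'' is the right skeleton given $c_0\subseteq c\subseteq l_{\infty}$. The necessity half is sound because each row of $B$ must lie in $(c_0)^{\beta}=l_1$ merely for $Bx$ to be defined on all of $c_0$, which makes the coordinate functionals $x\mapsto(Bx)_n$ continuous and the graph of $B$ closed; testing the resulting bounded operator against characteristic vectors of finite sets gives exactly (\ref{cond:lemma1}) with bound $\lVert B\rVert$. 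In the sufficiency half, the four-way splitting by signs of real and imaginary parts correctly extracts $\sup_n\sum_k\lvert b_{nk}\rvert\le 4M$ from the finite-set hypothesis (each sign class $K$ contributes $\sum_{k\in K}\lvert\operatorname{Re}b_{nk}\rvert\le\lvert\sum_{k\in K}b_{nk}\rvert\le M$ via the single term $n$ of the outer sum), the layer-cake identity $x_k^{\pm}=\int_0^1\mathbf{1}_{\{x_k^{\pm}>t\}}\,dt$ legitimately expresses the action of a row on a truncated bounded sequence as an integral average of actions on finite index sets (the integrands are step functions of $t$, so Tonelli applies without measurability worries), and Fatou's lemma removes the truncation. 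The constant $4M$ is not sharp, but no sharpness is claimed. The only blemish is notational: in the necessity paragraph you write ``the row $(b_{km})_m$'' with $k$ momentarily serving as the row index; this does not affect the argument.
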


   \begin{lemma}\label{lemma2}
   \normalfont
   $B \in (l_{\infty}: c)$ if and only if
   \begin{equation}\label{cond:lemma2,1}
   \lim\limits_{n \rightarrow \infty} b_{nk} \ \text{\normalfont exists for all} \ k  \ \text{\normalfont and}
   \end{equation}
 \begin{equation}\label{cond:lemma2,2}
   \lim\limits_{n}\sum\limits_{k}\lvert b_{nk}\rvert = \sum\limits_{k} \lvert \lim\limits_{n} b_{nk} \rvert.
   \end{equation}
 \end{lemma}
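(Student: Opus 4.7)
The plan is to establish the equivalence in two directions, treating condition (\ref{cond:lemma2,1}) as the easier necessary input and condition (\ref{cond:lemma2,2}) as the subtler one. For the forward direction, assume $B \in (l_\infty : c)$. Applying $B$ to the $k$-th unit sequence $e^{(k)} \in l_\infty$ gives $(Be^{(k)})_n = b_{nk}$, which must converge, immediately yielding (\ref{cond:lemma2,1}); write $\beta_k := \lim_n b_{nk}$. Next I would show the uniform bound $M := \sup_n \sum_k |b_{nk}| < \infty$: for each fixed $n$ the functional $x \mapsto (Bx)_n$ has norm $\sum_k |b_{nk}|$ on $l_\infty$ (attained at $x_k = \operatorname{sign}(b_{nk})$), and since each such functional is bounded and $(Bx)_n$ is bounded in $n$ for every $x \in l_\infty$ (being in $c$), Banach--Steinhaus gives uniform boundedness. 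Fatou's lemma then forces $\sum_k |\beta_k| \le M < \infty$.

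The heart of the necessity direction is then showing that $\lim_n \sum_k |b_{nk}|$ equals $\sum_k |\beta_k|$ (rather than merely bounding it above). Since Fatou already gives $\liminf_n \sum_k |b_{nk}| \ge \sum_k |\beta_k|$, the issue is to rule out a strict $\limsup$ excess. I would argue by a gliding-hump contradiction: if $\limsup_n \sum_k |b_{nk}| > \sum_k |\beta_k| + 3\epsilon$, then choose row indices $n_1 < n_2 < \cdots$ and column blocks $[N_{j-1}, N_j)$ inductively so that on block $j$ the mass $\sum_{k \in [N_{j-1}, N_j)} |b_{n_j, k}|$ is at least $\epsilon$ while prior blocks have contributed nearly all of $\sum_k |\beta_k|$, and the tail $\sum_{k \ge N_j} |b_{n_j,k}|$ is negligible. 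Defining $x \in l_\infty$ by $x_k = \operatorname{sign}(b_{n_j, k})$ for $k$ in block $j$, the row sums $(Bx)_{n_j}$ pick up an extra $\epsilon$-contribution at each selected row and can be arranged (by a sign flip on every other block) to make $(Bx)_{n_j}$ oscillate, contradicting $Bx \in c$.

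For sufficiency, assume (\ref{cond:lemma2,1}) and (\ref{cond:lemma2,2}). Condition (\ref{cond:lemma2,2}) makes sense only if both sides are finite, so $\sum_k |\beta_k| < \infty$ and $\sup_n \sum_k |b_{nk}| < \infty$. The crucial technical step is a Scheffé-type lemma: if $b_{nk} \to \beta_k$ for each $k$ and $\sum_k |b_{nk}| \to \sum_k |\beta_k|$, then $\sum_k |b_{nk} - \beta_k| \to 0$. This follows by splitting at large $N$: the finite head $\sum_{k<N} |b_{nk} - \beta_k|$ vanishes termwise, while the tails satisfy $\sum_{k \ge N} |b_{nk}| = \sum_k |b_{nk}| - \sum_{k<N} |b_{nk}| \to \sum_{k \ge N} |\beta_k|$, which can be made arbitrarily small by choosing $N$ large. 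Once the $l^1$-convergence $\sum_k |b_{nk}-\beta_k| \to 0$ is in hand, the estimate
\[
\Bigl| (Bx)_n - \sum_k \beta_k x_k \Bigr| \le \|x\|_\infty \sum_k |b_{nk} - \beta_k|
\]
shows $(Bx)_n \to \sum_k \beta_k x_k$ for every $x \in l_\infty$, so $Bx \in c$.

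The main obstacle is the gliding-hump construction in the necessity direction: one must coordinate the choices of $n_j$, $N_j$, and the signs of $x$ carefully so that the bounded sequence constructed genuinely defeats convergence of $(Bx)_n$ rather than merely making it large at a single row. The sufficiency direction is routine once the Scheffé-type lemma is established.
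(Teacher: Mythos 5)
The paper does not prove this lemma at all: it is imported verbatim from the Stieglitz--Tietz tables \cite{stieglitz1977matrixtransformationen} and used as a black box, so there is no internal proof to compare yours against. Judged on its own terms, your outline is the standard and correct proof of this classical Schur-type characterization: columns converge by testing on unit vectors; each row lies in $l_1$ (take $x_k=\operatorname{sign}(b_{nk})$) and the rows are uniformly bounded in $l_1$-norm by Banach--Steinhaus; Fatou gives $\liminf_n\sum_k\lvert b_{nk}\rvert\ge\sum_k\lvert\beta_k\rvert$; a gliding hump excludes a strict $\limsup$ excess; and sufficiency follows from the Scheff\'e-type statement $\sum_k\lvert b_{nk}-\beta_k\rvert\to 0$ combined with $\bigl\lvert(Bx)_n-\sum_k\beta_kx_k\bigr\rvert\le\lVert x\rVert_\infty\sum_k\lvert b_{nk}-\beta_k\rvert$. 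Two bookkeeping remarks on the hump, which is the only delicate step: it is cleaner to run it on $c_{nk}=b_{nk}-\beta_k$, whose columns are null and for which the claim reduces to $\sum_k\lvert c_{nk}\rvert\to 0$; and note that the head of row $n_j$ meets signs that were matched to \emph{earlier} rows, so what you actually control there is not $\sum_{k<N_{j-1}}\lvert b_{n_j,k}\rvert$ but its proximity to $\sum_{k<N_{j-1}}\beta_kx_k$, which converges to $L=\sum_k\beta_kx_k$ --- the alternating sign choice then makes $(Bx)_{n_j}$ oscillate about $L$ by a fixed positive amount, contradicting $Bx\in c$. With that made explicit your plan closes up into a complete proof.
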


 \begin{lemma}\label{lemma3}
 \normalfont
 $B \in (c:c)$ if and only if (\ref{cond:lemma2,1}),
 \begin{align}
 &\sup_{n} \sum\limits_{k} \lvert b_{nk} \rvert < \infty \ \text{\normalfont and}\label{cond:lemma3,1}\\
 &\lim\limits_{n}\sum\limits_{k}b_{nk} \ \text{\normalfont exists}\label{cond:lemma3,2}.
 \end{align}
 \end{lemma}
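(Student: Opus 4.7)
The plan is to prove the two directions separately. For necessity, suppose $B=(b_{nk})\in(c:c)$. Conditions (\ref{cond:lemma2,1}) and (\ref{cond:lemma3,2}) follow immediately by testing $B$ against two standard elements of $c$: first the $k$-th unit sequence $e^{(k)}=(0,\dots,0,1,0,\dots)\in c_0\subset c$, whose image $Be^{(k)}=\{b_{nk}\}_n$ must lie in $c$, yielding (\ref{cond:lemma2,1}); and then the constant sequence $\mathbf{1}=(1,1,\dots)\in c$, whose image $\{\sum_k b_{nk}\}_n$ must converge, yielding (\ref{cond:lemma3,2}). For the uniform bound (\ref{cond:lemma3,1}) I would invoke the Banach--Steinhaus theorem: each row defines a continuous linear functional $\phi_n:c\to\mathbb{C}$ by $\phi_n(x)=\sum_k b_{nk}x_k$, and a short dual-space argument (using $c_0\subset c$ and the identification $c_0^{*}\cong \ell_1$) identifies $\|\phi_n\|$ with $\sum_k |b_{nk}|$; since $\{\phi_n(x)\}_n$ is convergent and hence bounded for every $x\in c$, the uniform boundedness principle forces $\sup_n\sum_k |b_{nk}|<\infty$.

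For sufficiency, assume all three conditions hold and take $x\in c$ with $L=\lim_k x_k$. The plan is to decompose $x=L\mathbf{1}+(x-L\mathbf{1})$, where $x-L\mathbf{1}\in c_0$, and by linearity to split
\begin{equation*}
(Bx)_n \;=\; L\sum_{k}b_{nk} \;+\; \sum_{k}b_{nk}(x_k-L).
\end{equation*}
The first summand converges to $L\cdot\lim_n\sum_k b_{nk}$ by (\ref{cond:lemma3,2}). For the second summand, set $\beta_k=\lim_n b_{nk}$, which exists by (\ref{cond:lemma2,1}). Fatou's lemma combined with (\ref{cond:lemma3,1}) gives $\sum_k|\beta_k|\le M$, where $M=\sup_n\sum_k|b_{nk}|$, so the candidate limit $\sum_k\beta_k(x_k-L)$ is absolutely convergent. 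The task then reduces to showing
\begin{equation*}
\sum_{k}b_{nk}(x_k-L)\ \longrightarrow\ \sum_{k}\beta_k(x_k-L)\qquad (n\to\infty).
\end{equation*}

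The main obstacle is this last convergence, because (\ref{cond:lemma3,1}) gives only a uniform bound on the row norms, not the column-wise $\ell_1$ equality of Lemma \ref{lemma2}. I would handle it by a standard head/tail splitting argument: given $\varepsilon>0$, first choose $K$ so large that $\sup_{k>K}|x_k-L|<\varepsilon/(4M)$, which controls the tail $\bigl|\sum_{k>K}(b_{nk}-\beta_k)(x_k-L)\bigr|\le 2M\cdot\sup_{k>K}|x_k-L|<\varepsilon/2$ uniformly in $n$; then for this fixed $K$, use (\ref{cond:lemma2,1}) to make the finite head $\bigl|\sum_{k\le K}(b_{nk}-\beta_k)(x_k-L)\bigr|<\varepsilon/2$ for all $n$ sufficiently large. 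Combining the two estimates completes the argument and shows $(Bx)_n$ converges, so $B\in(c:c)$.
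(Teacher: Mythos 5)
The paper does not prove this lemma at all: it is quoted verbatim as a known result (the Kojima--Schur characterization of conservative matrices) from Stieglitz and Tietz, so there is no internal proof to compare against. Your proposal is a correct, self-contained proof of that classical theorem, and the structure is the standard one: necessity via test sequences $e^{(k)}$ and $\mathbf{1}$ plus Banach--Steinhaus for the uniform row bound, and sufficiency via the decomposition $x = L\mathbf{1} + (x - L\mathbf{1})$ with a head/tail estimate on the $c_0$ part. The one step you should make explicit is why each functional $\phi_n$ is well defined and bounded before invoking the uniform boundedness principle: you need each row $\{b_{nk}\}_k$ to lie in $\ell_1$, which follows because $B\in(c:c)$ forces $\sum_k b_{nk}x_k$ to converge for every $x\in c_0$ and $(c_0)^{\beta}=\ell_1$; your parenthetical about $c_0^{*}\cong\ell_1$ gestures at this but conflates the identification of $\|\phi_n\|$ with the prior fact that the row is summable. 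The Fatou-type inequality $\sum_k|\beta_k|\le\liminf_n\sum_k|b_{nk}|\le M$ and the $\varepsilon/2$--$\varepsilon/2$ split are both sound, and correctly avoid the stronger column-wise $\ell_1$ convergence condition (\ref{cond:lemma2,2}) that is only needed for the $(l_\infty:c)$ case of Lemma \ref{lemma2}. With that one clarification your argument is complete.
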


 \begin{lemma}\label{lemma4}
 \normalfont
 $B \in (c_0: c)$ if and only if (\ref{cond:lemma2,1}) and (\ref{cond:lemma3,1}) hold.
 \end{lemma}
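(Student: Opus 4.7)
The plan is to prove the biconditional in Lemma~\ref{lemma4} by treating the two directions separately: necessity uses the action of $B$ on coordinate sequences together with the Banach--Steinhaus uniform boundedness principle, while sufficiency is a direct $\epsilon$-argument splitting the series into a finite head and a small tail.

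For the necessity direction, assume $B \in (c_0 : c)$. Condition (\ref{cond:lemma2,1}) follows immediately by feeding the unit vectors $e^{(k)} = (0,\ldots,0,1,0,\ldots) \in c_0$ (with the $1$ in position $k$) into $B$: the image $Be^{(k)} = \{b_{nk}\}_n$ must lie in $c$, so $\lim_n b_{nk}$ exists for each $k$. To obtain (\ref{cond:lemma3,1}), note first that $Bx$ being defined for every $x \in c_0$ forces each row $\{b_{nk}\}_k$ to lie in $\ell_1$, because otherwise one could construct an $x \in c_0$ on which the $n$-th row series diverges (this is essentially the statement $c_0^{*} \cong \ell_1$). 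Each row therefore defines a continuous linear functional $B_n \colon c_0 \to \mathbb{C}$ with operator norm $\|B_n\| = \sum_k |b_{nk}|$. Pointwise convergence of $\{B_n x\}_n$ for each $x \in c_0$ implies pointwise boundedness, so the Banach--Steinhaus theorem yields $\sup_n \sum_k |b_{nk}| < \infty$.

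For the sufficiency direction, assume (\ref{cond:lemma2,1}) and (\ref{cond:lemma3,1}), and set $M := \sup_n \sum_k |b_{nk}|$. Given $x \in c_0$ and $\epsilon > 0$, choose an integer $K$ so that $|x_k| < \epsilon / M$ whenever $k > K$, and split
\[
(Bx)_n \;=\; \sum_{k=0}^{K} b_{nk}\, x_k \;+\; \sum_{k > K} b_{nk}\, x_k.
\]
The finite head converges as $n \to \infty$ by (\ref{cond:lemma2,1}), while the tail is bounded uniformly in $n$ by $\epsilon$. A routine Cauchy estimate on $|(Bx)_n - (Bx)_m|$ then shows that $\{(Bx)_n\}$ is Cauchy in $\mathbb{C}$, hence convergent, so $Bx \in c$.

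I expect the main obstacle to be the supremum bound (\ref{cond:lemma3,1}): it does not follow from testing on any single concrete sequence but requires a functional-analytic principle. The delicate part is justifying that each row already lies in $\ell_1$ (a prerequisite for $B_n$ even to be a bounded functional on $c_0$) and that its $\ell_1$-norm coincides with the operator norm on $c_0$; once these structural facts are in place, Banach--Steinhaus delivers the uniform bound and the remainder of the proof is bookkeeping.
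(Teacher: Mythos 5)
Your proof is correct. Note, however, that the paper offers no proof of this lemma to compare against: Lemma~\ref{lemma4} is one of five matrix-transformation characterizations quoted verbatim from Stieglitz and Tietz \cite{stieglitz1977matrixtransformationen} and used as a black box in the duality theorem. Your argument is the classical one for this result: unit vectors give (\ref{cond:lemma2,1}), the identification $c_0^{*}\cong \ell_1$ plus Banach--Steinhaus gives (\ref{cond:lemma3,1}), and the head/tail splitting gives sufficiency. Two small points you should tidy up if writing this in full: (i) in the sufficiency direction you should say explicitly that (\ref{cond:lemma3,1}) already guarantees each row lies in $\ell_1$, so that $(Bx)_n$ is defined for every $x\in c_0$ (indeed for every bounded $x$) before you start estimating; and (ii) the choice $|x_k|<\epsilon/M$ needs the trivial caveat $M>0$, the case $M=0$ being the zero matrix. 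Neither affects the substance; the proof stands.
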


 \begin{lemma}\label{lemma5}
 \normalfont
 $B \in (l_{\infty}: l_{\infty}) = (c: l_{\infty}) = (c_0: l_{\infty})$ if and only if (\ref{cond:lemma3,1}) holds.
 \end{lemma}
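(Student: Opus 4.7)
The plan is to close a short circular chain of implications. The inclusions $c_0 \subseteq c \subseteq l_\infty$ immediately give the easy direction $(l_\infty:l_\infty) \subseteq (c:l_\infty) \subseteq (c_0:l_\infty)$, so it is enough to establish $(c_0:l_\infty) \Rightarrow \text{(\ref{cond:lemma3,1})} \Rightarrow (l_\infty:l_\infty)$. Closing this loop pins down the three-way equality and the characterization simultaneously.

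For the sufficiency $\text{(\ref{cond:lemma3,1})} \Rightarrow (l_\infty:l_\infty)$, I would set $M = \sup_n \sum_k |b_{nk}|$ and estimate each row absolutely against an arbitrary $x \in l_\infty$: $\sum_k |b_{nk}||x_k| \leq \|x\|_\infty \sum_k |b_{nk}| \leq M \|x\|_\infty$. This makes $(Bx)_n$ well defined and gives $\sup_n |(Bx)_n| \leq M\|x\|_\infty$, so $Bx \in l_\infty$. For the necessity $(c_0:l_\infty) \Rightarrow \text{(\ref{cond:lemma3,1})}$, I would first observe that convergence of $\sum_k b_{nk}x_k$ for every $x \in c_0$ already forces $\{b_{nk}\}_k \in c_0^\beta = l_1$, so each row represents a bounded linear functional $f_n$ on the Banach space $c_0$ of operator norm $\|f_n\| = \sum_k |b_{nk}|$. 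The hypothesis $Bx \in l_\infty$ for every $x \in c_0$ is exactly the pointwise bound $\sup_n |f_n(x)| < \infty$ for each $x \in c_0$, and Banach-Steinhaus upgrades this to the uniform bound $\sup_n \|f_n\| < \infty$, which is (\ref{cond:lemma3,1}).

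The only non-routine step is this pointwise-to-uniform upgrade in the necessity half; everything else is elementary estimation or invocation of the inclusions. If I wanted to avoid the abstract functional-analytic machinery, I would instead run a gliding-hump argument: assuming $\sup_n \sum_k |b_{nk}| = \infty$, I would inductively pick a row index $n_j$ and a finite, disjoint column block on which the partial sum $\sum |b_{n_j k}|$ is arbitrarily large, then build an $x \in c_0$ that is constantly $\pm \varepsilon_j$ on the $j$-th block with $\varepsilon_j \to 0$ slowly enough to make $|(Bx)_{n_j}| \to \infty$, contradicting $Bx \in l_\infty$. Either route completes the necessity half of the chain.
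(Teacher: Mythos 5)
Your proof is correct, but note that the paper does not prove this lemma at all: it is quoted verbatim as a known characterization from the Stieglitz--Tietz compilation of matrix transformation results, so there is no in-paper argument to compare against. What you have written is the standard textbook proof of that classical result, and it is complete: the inclusion chain $(l_{\infty}:l_{\infty})\subseteq(c:l_{\infty})\subseteq(c_0:l_{\infty})$ is immediate from $c_0\subseteq c\subseteq l_{\infty}$, the sufficiency half is the elementary row estimate $\lvert (Bx)_n\rvert\le \lVert x\rVert_{\infty}\sum_k\lvert b_{nk}\rvert$, and the necessity half correctly identifies the rows as elements of $c_0^{\beta}=l_1$, hence as functionals $f_n$ on $c_0$ with $\lVert f_n\rVert=\sum_k\lvert b_{nk}\rvert$, to which Banach--Steinhaus applies. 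Your alternative gliding-hump construction is also sound and has the merit of being self-contained, avoiding the uniform boundedness principle entirely; either route closes the circular chain of implications and yields the three-way equality together with the characterization.
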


 Now, we determine the duals of the spaces $\mu (\Delta^{(a;l)}, v)$ for $\mu = l_\infty, c_o,$ and $c$ through following theorem.
 \begin{theorem}
 \normalfont
 Let $D = (d_{nk})$ and $E = (e_{nk})$ are two matrices such that
 \begin{equation}
 d_{nk} = \begin{cases}
                    0; \ \hspace{4.5cm}(k>n) \\
                    \frac{z_k}{v_k};  \hspace{4.5cm}(k = n)\\
                    z_k\left( \frac{(a)_{n-k, l}}{(n-k)! v_k} - \frac{(a)_{n-k-1, l}}{(n-k-1)! v_{k+1}}\right); \ (k<n)
                    \end{cases}
                    \text{and}
 \end{equation}
 \begin{equation}
 e_{nk} = \begin{cases}
                   0; \ \hspace{6.5cm} (k>n) \\
                   \frac{z_k}{v_k};  \hspace{6.5cm}(k = n)\\
                   \frac{1}{v_k}\sum\limits_{i=0}^{n-k} \frac{(a)_{i,l}}{i!}z_{k+i} - \frac{1}{v_{k+1}}\sum\limits_{i=0}^{n-k-1}\frac{(a)_{i,l}}{i!}z_{k+i+1}; \ (k<n)
                   \end{cases}
 \end{equation}
 and consider the sets $A_1, A_2, A_3, A_4$ and $A_5$ which are defined as follows:
 \begin{eqnarray*}
 A_1&=& \Big\{ z = (z_k) \in w :  \sup\limits_{K \in \textit{F}} \sum\limits_{n}  \Big \lvert \sum\limits_{k \in K} d_{nk} \Big\rvert < \infty \Big\}, \\
 A_2&=& \Big\{ z = (z_k) \in w : \lim\limits_{n \rightarrow \infty} e_{nk} \ \text{\normalfont exists for all} \ k \Big\},\\
 A_3&=& \Big\{ z = (z_k) \in w : \lim\limits_{n}\sum\limits_{k}\lvert e_{nk}\rvert = \sum\limits_{k} \lvert \lim\limits_{n} e_{nk} \rvert\Big\},\\
 A_4&=& \Big\{ z = (z_k) \in w : \sup_{n} \sum\limits_{k} \lvert e_{nk} \rvert < \infty\Big\} \ \text{\normalfont and}\\
 A_5&=& \Big\{ z = (z_k) \in w :  \lim\limits_{n}\sum\limits_{k}e_{nk} \ \text{\normalfont exists} \Big\}.
 \end{eqnarray*}
 Then,
 \begin{align*}
 &\big\{l_{\infty}(\Delta^{(a;l)}, v)\big\}^{\alpha} = A_1, \  \big\{l_{\infty}(\Delta^{(a;l)}, v)\big\}^{\beta} = A_2 \cap A_3, \   \big\{l_{\infty}(\Delta^{(a;l)}, v)\big\}^{\gamma} = A_4\\
 &\big\{c_{0}(\Delta^{(a;l)}, v)\big\}^{\alpha} = A_1, \  \big\{c_{0}(\Delta^{(a;l)}, v)\big\}^{\beta} = A_2 \cap A_4, \   \big\{c_{0}(\Delta^{(a;l)}, v)\big\}^{\gamma} = A_4\\
 & \big\{c(\Delta^{(a;l)}, v)\big\}^{\alpha} = A_1, \  \big\{c(\Delta^{(a;l)}, v)\big\}^{\beta} = A_2 \cap A_4 \cap A_5  \  \text{\normalfont and} \ \big\{c(\Delta^{(a;l)}, v)\big\}^{\gamma} = A_4.
 \end{align*}
 \end{theorem}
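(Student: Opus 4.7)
The strategy is to transfer the problem to the more tractable spaces $l_\infty, c_0, c$ via the isomorphism $T$ constructed in the preceding theorem, and then invoke the Stieglitz--Tietz lemmas. Concretely, for $x \in \mu(\Delta^{(a;l)},v)$ set $y = Tx \in \mu$, i.e.\ $y_n = \sum_{j=0}^{n} v_j\,\Delta^{(a;l)}x_j$. From the proof of the isomorphism (Equation (\ref{eq:xnforalphadual})) together with Theorem \ref{thcomposition}, we can invert to obtain
\[
x_k \;=\; \Delta^{(-a;l)}\!\left(\tfrac{y_k - y_{k-1}}{v_k}\right)
\;=\; \sum_{i=0}^{k} \frac{(a)_{i,l}}{i!}\cdot \frac{y_{k-i}-y_{k-i-1}}{v_{k-i}},
\]
adopting the convention $y_{-1}=0$.

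The main computation is to rewrite $z_k x_k$ as a linear transform of $\{y_j\}$. Substituting $j=k-i$ and separating the $y_j$ and $y_{j-1}$ contributions, then re-indexing the second sum by $j\mapsto j+1$, a short calculation gives
\[
z_k x_k \;=\; \frac{z_k}{v_k}\, y_k \;+\; \sum_{j=0}^{k-1} z_k\!\left(\frac{(a)_{k-j,l}}{(k-j)!\,v_j} - \frac{(a)_{k-j-1,l}}{(k-j-1)!\,v_{j+1}}\right)\! y_j,
\]
which is precisely $\sum_j d_{kj} y_j$ with $D=(d_{kj})$ as in the statement. Hence $z\cdot x = Dy$, and $z \in \{\mu(\Delta^{(a;l)},v)\}^{\alpha}$ iff $Dy \in l_1$ for every $y \in \mu$, i.e.\ $D \in (\mu : l_1)$. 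For all three choices $\mu \in \{l_\infty, c_0, c\}$ Lemma \ref{lemma1} gives the single condition defining $A_1$; this yields the $\alpha$-dual in every case.

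For the $\beta$- and $\gamma$-duals I pass to the partial sums
\[
\sum_{k=0}^{n} z_k x_k \;=\; \sum_{j=0}^{n}\!\left(\sum_{k=j}^{n} d_{kj}\right)\! y_j \;=\; \sum_{j=0}^{n} e_{nj}\, y_j,
\]
so the sequence of partial sums is the $E$-transform of $y$. A direct computation of $\sum_{k=j}^{n} d_{kj}$, combining the isolated $z_j/v_j$ term with the sum via $(a)_{0,l}/0!=1$ and re-indexing the subtracted series by $i\mapsto i-1$, produces exactly the matrix $E=(e_{nk})$ displayed in the statement. Thus $z\in\{\mu(\Delta^{(a;l)},v)\}^{\beta}$ iff $E\in(\mu:c)$ and $z\in\{\mu(\Delta^{(a;l)},v)\}^{\gamma}$ iff $E\in(\mu:l_\infty)$. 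Applying Lemma \ref{lemma2} for $\mu=l_\infty$, Lemma \ref{lemma4} for $\mu=c_0$, and Lemma \ref{lemma3} for $\mu=c$ reads off $\beta$-dual as $A_2\cap A_3$, $A_2\cap A_4$, and $A_2\cap A_4\cap A_5$ respectively; Lemma \ref{lemma5} delivers $A_4$ as the $\gamma$-dual in all three cases.

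The conceptual content is thus minimal — everything reduces to the Stieglitz--Tietz characterizations once the isomorphism is applied. The main obstacle is purely combinatorial: carefully carrying out the double-sum rearrangement that expresses $z_k x_k$ in terms of $y_j$, verifying that the boundary terms at $j=k$ and $j=0$ line up correctly, and then telescoping (column-summing) $D$ to obtain $E$ in the compact form stated. I would write out the re-indexing steps explicitly for $d_{nk}$, then note that the passage $D \leadsto E$ is just $e_{nk} = \sum_{m=k}^{n} d_{mk}$, so the three dual characterizations fall out by direct citation of the appropriate lemma.
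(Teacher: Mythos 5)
Your proposal is correct and follows essentially the same route as the paper: transfer $z\cdot x$ and its partial sums to the matrices $D$ and $E$ acting on $y=Tx$ via $x_k=\Delta^{(-a;l)}\big((y_k-y_{k-1})/v_k\big)$, then read off the duals from the Stieglitz--Tietz lemmas; the paper merely asserts the identities $z_nx_n=(Dy)_n$ and $t_n=(Ey)_n$ where you carry out the re-indexing explicitly. Your computation in fact shows the off-diagonal entry of $D$ should carry the row-indexed factor $z_n$ rather than the $z_k$ printed in the statement (as confirmed by the column-sum relation $e_{nk}=\sum_{m=k}^{n}d_{mk}$ reproducing the stated $E$), which is a typo in the theorem rather than a flaw in your argument.
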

\begin{proof}
We determine duals of the space $c_0(\Delta^{(a;l)},v)$. Duals of other spaces $l_{\infty}(\Delta^{(a;l)},v)$ and $c(\Delta^{(a;l)},v)$ can be deduced on the similar lines as that of the $c_0(\Delta^{(a;l)},v)$. By Equation (\ref{eq:xnforalphadual}), we obtain that
\begin{equation}
z_nx_n = z_n  \Delta^{(-a;l)}\Big(\frac{y_n - y_{n-1}}{v_n}\Big) = (Dy)_n
\end{equation}
for all $\{x_n\} \in c_0(\Delta^{(a;l)},v)$ and $\{y_n\} \in c_0$. Now the sequence $\{z_n\} \in \big\{c_0(\Delta^{(a;l)},v)\big \}^{\alpha}$ iff $\{z_n x_n\} \in l_1$ for all $\{x_n\} \in c_0(\Delta^{(a;l)},v)$. That is, $\{z_n\} \in \big\{c_0(\Delta^{(a;l)},v)\big \}^{\alpha}$ iff $Dy \in l_1$ for all $\{y_n\} \in c_0$. In other words, $\{z_n\} \in \big\{c_0(\Delta^{(a;l)},v)\big \}^{\alpha}$ iff $D \in (c_0: l_1)$. Keeping this in view and using Lemma \ref{lemma1}, we conclude that
\begin{equation}
\big\{c_0(\Delta^{(a;l)},v)\big \}^{\alpha} = A_1
\end{equation}
To find $\beta$-dual, we define a squence $\{t_n\}$ as follows:
\begin{equation}
t_n = \sum\limits_{r=0}^{n} z_r x_r = (Ey)_n.
\end{equation}
Then the sequence $\{z_n\} \in \big\{c_0(\Delta^{(a;l)},v)\big \}^{\beta}$ iff $\{t_n\} \in c$ for all $\{x_n\}\in c_0(\Delta^{(a;l)},v)$. That is, $\{z_n\} \in \big\{c_0(\Delta^{(a;l)},v)\big \}^{\beta}$ iff $Ey \in c$ for all $\{y_n\} \in c_0$. In other words, $\{z_n\} \in \big\{c_0(\Delta^{(a;l)},v)\big \}^{\beta}$ iff $E \in (c_0: c)$. Keeping this in view and using Lemma \ref{lemma4}, we conclude that
\begin{equation}
\big\{c_0(\Delta^{(a;l)},v)\big \}^{\beta} = A_2 \cap A_4.
\end{equation}
Similarly, using Lemma \ref{lemma5}, we get
\begin{equation}
\big\{c_0(\Delta^{(a;l)},v)\big \}^{\gamma} =  A_4.
\end{equation}
\end{proof}
\section{Conclusions}
By introducing $l$- fractional difference operator, we have given a more general fractional order difference operator. Also, we have defined a class of difference sequence spaces with the help of $l$- fractional difference operator. Further, we have studied some topological properties and determined the duals of the spaces.

\bibliographystyle{plain}
\bibliography{mybib}

\begin{thebibliography}{10}

\bibitem{aydin2004some}
Cafer Ayd{\i}n and Feyz{\i} Ba{\c{s}}ar.
\newblock Some new difference sequence spaces.
\newblock {\em Applied Mathematics and Computation}, 157(3):677--693, 2004.

\bibitem{baliarsingh2013some}
P~Baliarsingh.
\newblock Some new difference sequence spaces of fractional order and their
  dual spaces.
\newblock {\em Applied Mathematics and Computation}, 219(18):9737--9742, 2013.

\bibitem{baliarsingh2013unifying}
Pinakadhar Baliarsingh.
\newblock A unifying approach to the difference operators and their
  applications.
\newblock {\em Boletim da Sociedade Paranaense de Matem{\'a}tica},
  33(1):49--56, 2013.

\bibitem{bhardwaj2010generalized}
Vinod Bhardwaj and Indu Bala.
\newblock Generalized difference sequence space defined by $| \overline{N},
  p_k|$ summability and an orlicz function in seminormed space.
\newblock {\em Mathematica Slovaca}, 60(2):257--264, 2010.

\bibitem{chandra2002generalised}
PRABHAT Chandra and BC~Tripathy.
\newblock On generalised kothe-toeplitz duals of some sequence spaces.
\newblock {\em Indian Journal of Pure and Applied Mathematics},
  33(8):1301--1306, 2002.

\bibitem{diaz2007hypergeometric}
Rafael D{\i}az and Eddy Pariguan.
\newblock On hypergeometric functions and pochhammer k-symbol.
\newblock {\em Divulgaciones Matemt{\i}cas}, 15(2):179--192, 2007.

\bibitem{djolovic2011characterizations}
Ivana Djolovi{\'c} and Eberhard Malkowsky.
\newblock Characterizations of compact operators on some euler spaces of
  difference sequences of order m.
\newblock {\em Acta Mathematica Scientia}, 31(4):1465--1474, 2011.

\bibitem{dutta2014note}
S~Dutta and P~Baliarsingh.
\newblock A note on paranormed difference sequence spaces of fractional order
  and their matrix transformations.
\newblock {\em Journal of the Egyptian Mathematical Society}, 22(2):249--253,
  2014.

\bibitem{et2004some}
Mik{\^a}il Et, H{\i}fs{\i} Altinok, and Yavuz Altin.
\newblock On some generalized sequence spaces.
\newblock {\em Applied Mathematics and computation}, 154(1):167--173, 2004.

\bibitem{et1995some}
Mikail Et and R{\i}fat {\c{C}}olak.
\newblock On some generalized difference sequence spaces.
\newblock {\em Soochow Journal of Mathematics}, 21(4):377--386, 1995.

\bibitem{hazarika2014difference}
Bipan Hazarika.
\newblock On difference ideal convergence of double sequences in random
  2-normed spaces.
\newblock {\em Acta Mathematica Vietnamica}, 39(3):393--404, 2014.

\bibitem{hazarika2011some}
Bipan Hazarika and Ekrem Savas.
\newblock Some i-convergent lambda-summable difference sequence spaces of fuzzy
  real numbers defined by a sequence of orlicz functions.
\newblock {\em Mathematical and Computer Modelling}, 54(11):2986--2998, 2011.

\bibitem{icsik2010generalized}
Mahmut I{\c{s}}ik.
\newblock Generalized vector-valued sequence spaces defined by modulus
  functions.
\newblock {\em Journal of Inequalities and Applications}, 2010(1):457892, 2010.

\bibitem{kadakgeneralized}
U{\u{g}}ur Kadak.
\newblock Generalized weighted invariant mean based on fractional difference
  operator with applications to approximation theorems for functions of two
  variables.
\newblock {\em Results in Mathematics}, pages 1--22.

\bibitem{kadak2015generalized}
Ugur Kadak.
\newblock Generalized lacunary statistical difference sequence spaces of
  fractional order.
\newblock {\em International Journal of Mathematics and Mathematical Sciences},
  2015, 2015.

\bibitem{kadak2017weighted}
U{\u{g}}ur Kadak.
\newblock Weighted statistical convergence based on generalized difference
  operator involving (p, q)-gamma function and its applications to
  approximation theorems.
\newblock {\em Journal of Mathematical Analysis and Applications},
  448(2):1633--1650, 2017.

\bibitem{kizmaz1981certain}
H~Kizmaz.
\newblock Certain sequence spaces.
\newblock {\em Can. Math. Bull.}, 24(2):169--176, 1981.

\bibitem{maji2014b}
Amit Maji and Parmeshwary~Dayal Srivastava.
\newblock On b$^{(m)}$-difference sequence spaces using generalized means and
  compact operators.
\newblock {\em Analysis}, 34(3):257--281, 2014.

\bibitem{mursaleen1996generalized}
Mursaleen.
\newblock Generalized spaces of difference sequences.
\newblock {\em Journal of Mathematical Analysis and Applications},
  203(3):738--745, 1996.

\bibitem{srivastava2010generalized}
PD~Srivastava and Sudhanshu Kumar.
\newblock Generalized vector-valued paranormed sequence space using modulus
  function.
\newblock {\em Applied Mathematics and Computation}, 215(12):4110--4118, 2010.

\bibitem{stieglitz1977matrixtransformationen}
Michael Stieglitz and Hubert Tietz.
\newblock Matrixtransformationen von folgenr{\"a}umen eine
  ergebnis{\"u}bersicht.
\newblock {\em Mathematische Zeitschrift}, 154(1):1--16, 1977.

\end{thebibliography}

\end{document}